\newtheorem{lemma}{Lemma}
\newtheorem{theorem}[lemma]{Theorem}
\newtheorem{example}[lemma]{Example}
\title[Limits of completely decomposable modules]{A note on the direct limit of increasing sequences of completely decomposable modules over integral domains}
\author{J. E. Mac\'{\i}as-D\'{\i}az}
\address{Departamento de Matem\'{a}ticas y F\'{\i}sica, Universidad Aut\'{o}noma de Aguascalientes, Avenida Universidad 940, Ciudad Universitaria, Aguascalientes, Ags. 20131, Mexico}
\email{jemacias@correo.uaa.mx}
\subjclass[2000]{Primary 13C10, 13C05; Secondary 13F05, 16D40}
\keywords{Pontryagin-Hill theorems, increasing sequences of modules, completely decomposable modules, integral domains, purity of modules}
\date{\today}
\begin{document}

\begin{abstract}
In this note, we establish conditions under which the union of an increasing sequence of completely decomposable modules over domains are again completely decomposable. In our investigation, the condition of purity of modules is crucial. In fact, the main result reported in this work states that a module is completely decomposable when it is the union of a countable, ascending chain of completely decomposable, pure submodules, providing thus a generalization of Hill's criterion of freeness from abelian group theory.
\end{abstract}

\maketitle

\section{Introduction\label{Sec:Intro}}

In 1970, P. Hill established that an abelian group is free if it is the union of a countable, increasing sequence of free, pure subgroups \cite {Hill}; with this result, Hill generalized a theorem previously proved by L. Pontryagin in 1934, which was valid only for countable groups \cite {Pontryagin}. Evidently, many avenues of algebraic research opened after Hill's result, particularly in the theory of modules over domains, where abelian group theory finds a natural route of expansion. For instance, the literature reports on some generalizations of Hill's theorem to freeness and projectivity of modules over domains \cite {Macias-PJM}, to completely decomposable and separable modules \cite {Fuchs-Macias, Rangas}, to modules which are direct sums of ideals of Pr\"{u}fer domains \cite {Macias-AC}, and to Butler modules \cite {Rangas}. However, most of these results impose restrictive conditions on the links of the countable chains and/or on the type of rings employed.

The present work establishes a generalization of Hill's theorem to completely decomposable modules over commutative, integral domains, which extends many of the results available in the literature on the topic. In particular, the main result of this note extends the version of Hill's theorem for completely decomposable modules presented in \cite {Fuchs-Macias, Macias-AC}. Section \ref {S:Prelim} of this note quotes some notions from module theory, such as the conditions of complete decomposability and purity, and some fundamental class of families of modules. That stage is used to present some crucial lemmas in our investigation. Finally, we close our study with the statement and the corresponding proof of our main result.

\section{Preliminaries\label{S:Prelim}}

Once and for all, we state that all modules in this work will be over an arbitrary \emph {domain}, that is, a commutative ring with identity and without zero divisors; for the sake of non-triviality, we assume explicitly that $1 \neq 0$. Moreover, we will restrict our attention to torsion-free modules. A submodule $N$ of a module $M$ over a domain $R$ is called \emph {pure} if every system of equations
\begin{equation*}
  \sum _{j = 1} ^n r _{i , j} x _j = a _i, \quad (i = 1 , \ldots , m),
\end{equation*}
where each $r _{i , j} \in R$ and $a _i \in N$, has a solution in $N$ whenever it is solvable in $M$. It is easy to see that every pure submodule $N$ of $M$ is also \emph {relatively divisible}, that is, the identity $r M \cap N = r N$ is satisfied for every $r \in R$. In fact, the two conditions coincide for modules over Pr\"{u}fer domains \cite{Warfield} and, among integral domains, Pr\"{u}fer domains are the only ones with such property \cite{Cartan-Eilenberg}.

By a \emph {completely decomposable} module we mean a torsion-free module which is the direct sum of rank-one submodules. Evidently, every free module over a domain $R$ is completely decomposable, and so are modules which are isomorphic to direct sums of ideals of $R$; in this sense, complete decomposability generalizes the notion of freedom in the category of $R$-modules (for elementary properties on purity, relative divisibility and complete decomposability of modules employed in this work without an explicit reference, we refer to \cite {Fuchs-Salce2}).

\begin{lemma}
  \label{Lemma:1}
  A module $M$ is completely decomposable if there exists a continuous, well-ordered, ascending chain
  \begin{equation}
    \label{Eq:rho-Chain}
    0 = N _0 < N _1 < \ldots < N _\rho < \ldots \quad (\rho < \sigma)
  \end{equation}
  of submodules of $M$, such that
  \begin{enumerate}
    \item[(a)] for every $\rho < \sigma$, $N _{\rho + 1} / N _\rho$ is completely decomposable, 
    \item[(b)] for every $\rho < \sigma$, the exact sequence $0 \rightarrow N _\rho \rightarrow N _{\rho + 1} \rightarrow N _{\rho + 1} / N _\rho \rightarrow 0$ splits, and
    \item[(c)] $M = \bigcup _{\rho < \sigma} N _\rho$.
  \end{enumerate}
\end{lemma}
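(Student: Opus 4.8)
The plan is to prove the lemma by transfinite recursion along the chain \eqref{Eq:rho-Chain}: at each successor stage one peels off a completely decomposable complement by means of the splitting hypothesis (b), and then one verifies, by transfinite induction, that the accumulated complements form an internal direct sum decomposition of $M$. Purity plays no role in this argument; the statement is purely about splitting filtrations, and purity will enter only later, when such a filtration is manufactured out of a chain of pure submodules.

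Concretely, first I would use (b): for each $\rho < \sigma$, the splitting of $0 \to N _\rho \to N _{\rho + 1} \to N _{\rho + 1} / N _\rho \to 0$ provides a submodule $C _\rho \le N _{\rho + 1}$ with $N _{\rho + 1} = N _\rho \oplus C _\rho$ and $C _\rho \cong N _{\rho + 1} / N _\rho$, the latter being completely decomposable by (a). These choices are independent of one another, each depending only on a single short exact sequence, so no coherence condition among them is needed. Next I would prove, by transfinite induction on $\tau < \sigma$, that $N _\tau = \bigoplus _{\rho < \tau} C _\rho$ as an internal direct sum inside $M$. The base case $\tau = 0$ is the empty sum $N _0 = 0$. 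If $\tau = \upsilon + 1$, then the inductive hypothesis together with the choice of $C _\upsilon$ gives $N _\tau = N _\upsilon \oplus C _\upsilon = \bigl( \bigoplus _{\rho < \upsilon} C _\rho \bigr) \oplus C _\upsilon = \bigoplus _{\rho < \tau} C _\rho$. If $\tau$ is a limit ordinal, continuity of the chain gives $N _\tau = \bigcup _{\upsilon < \tau} N _\upsilon$; each $C _\rho$ with $\rho < \tau$ lies in $N _{\rho + 1} \subseteq N _\tau$, so $\sum _{\rho < \tau} C _\rho \subseteq N _\tau$, while every $x \in N _\tau$ lies in some $N _\upsilon = \bigoplus _{\rho < \upsilon} C _\rho \subseteq \sum _{\rho < \tau} C _\rho$; and any relation among finitely many elements of the $C _\rho$ already lives in a common $N _\upsilon$, where the sum is direct by the inductive hypothesis, so the whole sum $\sum _{\rho < \tau} C _\rho$ stays direct. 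Running this same argument with $M = \bigcup _{\rho < \sigma} N _\rho$ in place of $N _\tau$ (or, when $\sigma$ is a successor, noting that $M$ is then the top term of \eqref{Eq:rho-Chain}, already covered by the successor step) exhibits $M$ as an internal direct sum of the submodules $C _\rho$.

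Since each $C _\rho$ is a direct sum of rank-one submodules, $M$ is a direct sum of rank-one submodules as well, and it is torsion-free as a direct sum of torsion-free modules; hence $M$ is completely decomposable, which is what we want. I expect the only delicate point to be the limit stage of the induction: there one must invoke continuity of \eqref{Eq:rho-Chain} to identify $N _\tau$ with the union of its predecessors and then observe that a directed union of internal direct sums --- all formed inside the one fixed module $M$ --- is again an internal direct sum. Everything else is routine bookkeeping.
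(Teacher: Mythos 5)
Your proof is correct and is essentially the paper's argument written out in full: the paper simply asserts that $M$ is isomorphic to the direct sum of the factors $N_{\rho+1}/N_\rho$, and your transfinite induction with the complements $C_\rho$ is exactly the standard justification of that assertion. Nothing further is needed.
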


\begin{proof}
  Under the hypotheses, $M$ is isomorphic to the direct sum of the modules $N _{\rho + 1} / N _\rho$ for every $\rho < \sigma$, thus, it is completely decomposable.
\end{proof}

Let $\kappa$ be an infinite cardinal number, and let $M$ be a module. By a \emph {$G ^* (\kappa)$-family} of submodules of $M$ we understand a family $\mathcal {B}$ of submodules of $M$ satisfying the following properties:
\begin{enumerate}
  \item[(i)] $0 , M \in \mathcal {B}$,
  \item[(ii)] $\mathcal {B}$ is closed under unions of ascending chains, and
  \item[(iii)] for every $H \subseteq M$ of cardinality at most $\kappa$ and every $A _0 \in \mathcal {B}$, there exists $A \in \mathcal {B}$ containing both $A _0$ and $H$, such that $A / A _0$ has rank at most $\kappa$.
\end{enumerate}

One immediately notices that every module has a $G ^* (\aleph _0)$-family of submodules, namely, the set of all its submodules. However, it is not true that every module has a $G ^* (\aleph _0)$-family consisting of pure submodules. Nevertheless, it has been shown that modules of projective dimension less than or equal to $1$ over Pr\"{u}fer domains with a countable number of maximal ideals (included valuation domains, obviously) do possess $G ^* (\aleph _0)$-families of pure submodules \cite{Bazzoni-Fuchs, Macias-PJM}.

\begin{example}
  \label {Ex:1}
  Every completely decomposable module $M$ can be given an explicit $G ^* (\aleph _0)$-family consisting of pure submodules. To show this, let $M = \oplus _{i \in I} M _i$ be a fixed, direct decomposition of $M$ into rank-one, torsion-free modules. Then the collection of submodules of $M$ defined by
  \begin{equation*}
    \mathcal {B} = \{ \oplus _{j \in J} M _j : J \subseteq I \},
  \end{equation*}
  which consists of all the partial direct sums in the given decomposition of $M$, is one such family. Moreover, every member of $\mathcal {B}$ is evidently a completely decomposable module. Furthermore, given $A , B \in \mathcal {B}$ with $A \leq B$, it follows that $B / A$ is completely decomposable. \qed
\end{example}

For the remainder of the present section, we assume that $M$ is a torsion-free module over a domain, for which there exists a continuous, well-ordered, ascending chain
\begin{equation}
  \label{Eq:kappa-Chain}
  0 = M _0 < M _1 < \ldots < M _\nu < \ldots \quad (\nu < \kappa)
\end{equation}
of submodules of $M$, satisfying the following properties:
\begin{enumerate}
  \item[(A)] every $M _\nu$ is pure in $M$,
  \item[(B)] every $M _\nu$ has a $G ^* (\kappa)$-family $\mathcal {B} _\nu$ consisting of pure submodules, and
  \item[(C)] $M = \bigcup _{\nu < \kappa} M _\nu$.
\end{enumerate}

The following lemmas are the cornerstones of our investigation. For their proofs, see Section XVI.1 of \cite {Fuchs-Salce2}.

\begin{lemma}
  \label {Lemma:2}
  There exists a continuous, well-ordered, ascending chain 
  \begin{equation}
    \label{Eq:tau-Chain}
    0 = A _0 < A _1 < \ldots < A _\alpha < \ldots \quad (\alpha < \tau)
  \end{equation}
  of submodules of $M$, such that
  \begin{enumerate}
    \item[(a)] for every $\alpha < \tau$ and $\nu < \kappa$, $A _\alpha \cap M _\nu \in \mathcal {B} _\nu$,
    \item[(b)] for every $\alpha < \tau$, the factor module $A _{\alpha + 1} / A _\alpha$ has rank at most $\kappa$,
    \item[(c)] for every $\alpha < \tau$ and $\nu < \kappa$, $(A _\alpha \cap M _{\nu + 1}) + (A _{\alpha + 1} \cap M _\nu) \in \mathcal {B} _{\nu + 1}$, and
    \item[(d)] $M = \bigcup _{\alpha < \tau} A _\alpha$. \qed
  \end{enumerate}
\end{lemma}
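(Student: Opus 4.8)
The plan is to build the chain $(A_\alpha)_{\alpha<\tau}$ by transfinite recursion on $\alpha$, taking $\tau$ large enough to list a generating set $\{m_\alpha:\alpha<\tau\}$ of $M$; forcing $m_\alpha\in A_{\alpha+1}$ at stage $\alpha$ then delivers (d). Set $A_0=0$, which lies in every $\mathcal B_\nu$ by clause (i) of the definition of a $G^*(\kappa)$-family, so (a) holds at $0$. At a limit ordinal $\lambda$ put $A_\lambda=\bigcup_{\alpha<\lambda}A_\alpha$: continuity of the chain is then automatic, and $A_\lambda\cap M_\nu=\bigcup_{\alpha<\lambda}(A_\alpha\cap M_\nu)$ is an ascending union of members of $\mathcal B_\nu$, hence lies in $\mathcal B_\nu$ by clause (ii), so (a) survives. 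All the work is at successor stages, where (b) and (c) --- the rank bound on the new factor and the coupling between $M_\nu$ and $M_{\nu+1}$ --- must be engineered directly, since neither is preserved by the unions and sums that occur at limits.

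For the successor stage, suppose $A_\alpha$ has already been built with $A_\alpha\cap M_\nu\in\mathcal B_\nu$ for every $\nu<\kappa$. I would form $A_{\alpha+1}$ from $A_\alpha$ by an $\omega$-step closing-off process: write $A_\alpha^{(n)}$ for the module in hand after stage $n$, with $A_\alpha^{(0)}=A_\alpha$, alternating \emph{input} stages and \emph{repair} stages, and set $A_{\alpha+1}=\bigcup_{n<\omega}A_\alpha^{(n)}$ at the end. An input stage adjoins $m_\alpha$, together with the finitely many witnesses requested by previous repairs, to the module currently built. A repair stage starts from a submodule $A'\supseteq A_\alpha$ with $A'/A_\alpha$ of rank at most $\kappa$ and sweeps once through all $\nu<\kappa$, performing at level $\nu$ two applications of clause (iii): with base $A_\alpha\cap M_\nu\in\mathcal B_\nu$ and a set of at most $\kappa$ elements chosen to absorb the trace $A'\cap M_\nu$, it enlarges $A'\cap M_\nu$ to a member of $\mathcal B_\nu$ whose quotient over $A_\alpha\cap M_\nu$ still has rank at most $\kappa$; and with base $A_\alpha\cap M_{\nu+1}\in\mathcal B_{\nu+1}$ it enlarges $(A_\alpha\cap M_{\nu+1})+(A'\cap M_\nu)$ to a member of $\mathcal B_{\nu+1}$ --- this second adjustment is what will ultimately yield (c). Since there are $\kappa$ levels and each contributes rank at most $\kappa$, one sweep adds material of rank at most $\kappa\cdot\kappa=\kappa$; it is adjoined to $A'$ and the process continues.

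Granting that the interlacing can be arranged, (b) holds because the total material adjoined to $A_\alpha$ has rank at most $\kappa$, and (d) holds because $m_\alpha\in A_{\alpha+1}$. For (a) one shows that, for each fixed $\nu$, the ascending chain of traces $A_\alpha^{(n)}\cap M_\nu$ re-enters $\mathcal B_\nu$ cofinally often --- which is exactly what forces the scheduling of stages so that every level gets repaired after each disturbance it suffers from the repairs carried out at the other levels --- and then $A_{\alpha+1}\cap M_\nu=\bigcup_n(A_\alpha^{(n)}\cap M_\nu)\in\mathcal B_\nu$ by clause (ii). Condition (c) comes out in the same way from the $\mathcal B_{\nu+1}$-enlargements, using that $A_\alpha\cap M_{\nu+1}\in\mathcal B_{\nu+1}$ and that the purity built into hypotheses (A) and (B) keeps the traces $A'\cap M_\nu$, their images inside $M_{\nu+1}$, and the sums involved torsion-free and mutually compatible.

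The main obstacle, as the outline already shows, is the bookkeeping rather than any single algebraic fact: one must schedule the $\omega$ many stages so that, simultaneously for all $\kappa$ levels, each trace $A_\alpha^{(n)}\cap M_\nu$ eventually re-enters $\mathcal B_\nu$ despite the perturbations caused by repairs at the other levels, and so that the coupled $\mathcal B_{\nu+1}$-repairs stabilise to give (c); and one must check at every repair that the element set fed into clause (iii) can be taken of cardinality at most $\kappa$, which is where the running rank bound on the successive quotients $A'/A_\alpha$ is used. This is precisely the recursion carried out in detail in Section~XVI.1 of \cite{Fuchs-Salce2}.
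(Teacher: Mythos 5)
Your overall strategy---transfinite recursion, with limit stages handled by closure of the families under unions of ascending chains and successor stages handled by an $\omega$-step closing-off that repeatedly invokes clause (iii)---is exactly the strategy of the argument the paper points to (the paper itself offers no proof beyond the citation of Section~XVI.1 of \cite{Fuchs-Salce2}). But two of the points you defer to ``bookkeeping'' are the actual mathematical content of the lemma, and as written your sketch does not close either of them. First, for condition (c) you propose to ``enlarge $(A_\alpha\cap M_{\nu+1})+(A'\cap M_\nu)$ to a member of $\mathcal B_{\nu+1}$''. That produces a member of $\mathcal B_{\nu+1}$ \emph{containing} the module in question, whereas (c) asserts that $(A_\alpha\cap M_{\nu+1})+(A_{\alpha+1}\cap M_\nu)$ itself \emph{is} a member. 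To convert containment into equality you must absorb the enlargement $E\in\mathcal B_{\nu+1}$ back into a module of the special form $(A_\alpha\cap M_{\nu+1})+(C_{n+1}\cap M_\nu)$ at the next step; but the new elements of $E$ live in $M_{\nu+1}$ and need not decompose as an element of $A_\alpha\cap M_{\nu+1}$ plus an element of $M_\nu$, so adjoining them to $C_{n+1}$ enlarges the trace on $M_{\nu+1}$ rather than the summand $C_{n+1}\cap M_\nu$. Your sketch supplies no mechanism for this, and it is the delicate point of the whole construction.

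Second, clause (iii) absorbs a \emph{set} $H$ of cardinality at most $\kappa$, while what must be absorbed at each repair is a trace $C_n\cap M_\nu$: a submodule whose quotient over the previously chosen member of $\mathcal B_\nu$ has rank at most $\kappa$ but which, over a general domain, need not be generated by $\kappa$ elements (already a rank-one torsion-free module over a valuation domain can fail to be countably generated). The fix is not the generic ``compatibility'' you attribute to purity, but a specific use of it: take $H$ to be a maximal linearly independent subset of the trace modulo the current member $B$; the resulting $B'\in\mathcal B_\nu$ then contains a nonzero multiple of every element of the trace, and since the members of $\mathcal B_\nu$ are pure, hence relatively divisible, in $M_\nu$ (hypothesis (B)) and all modules are torsion-free, $B'$ must contain the entire trace. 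This is why the lemma requires families of \emph{pure} submodules and not arbitrary $G^*(\kappa)$-families. With these two points supplied, your outline becomes the proof in \cite{Fuchs-Salce2}; without them it is a plan rather than a proof.
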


Departing from the previous lemma, the proof of the following result is easy. One should only mention that the modules in the ascending chain \eqref {Eq:rho-Chain} described below, are actually the collection of modules $A _\alpha + (A _{\alpha + 1} \cap M _\nu)$, for every $\alpha < \tau$ and $\nu < \kappa$, well ordered under set inclusion.

\begin{lemma}
  \label{Lemma:3}
  There exists a continuous, well-ordered, ascending chain \eqref {Eq:rho-Chain} of submodules of $M$, such that
  \begin{enumerate}
    \item[(a)] every factor module $N _{\rho + 1} / N _\rho$ has rank at most $\kappa$,
    \item[(b)] every factor module $N _{\rho + 1} / N _\rho$ is isomorphic to a factor module $A / B$, with both $A$ and $B$ in some family $\mathcal {B} _\nu$ and $B < A$, and
    \item[(c)] $M = \bigcup _{\rho < \sigma} N _\rho$. \qed
  \end{enumerate}
\end{lemma}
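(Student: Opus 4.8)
The plan is to build the chain \eqref{Eq:rho-Chain} by refining the chain \eqref{Eq:tau-Chain} of Lemma~\ref{Lemma:2}: between each consecutive pair $A _\alpha < A _{\alpha + 1}$ I would insert a continuous chain of intermediate modules indexed by $\nu < \kappa$, and then re-index the resulting doubly-indexed family, ordered lexicographically, as a single well-ordered chain.

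Concretely, whenever $A _{\alpha + 1}$ is defined and $\nu < \kappa$, put $P _{\alpha , \nu} = A _\alpha + (A _{\alpha + 1} \cap M _\nu)$. Because $M _0 = 0$ we have $P _{\alpha , 0} = A _\alpha$; because $M _\nu \le M _{\nu + 1}$ the family $( P _{\alpha , \nu} ) _{\nu < \kappa}$ is ascending and, by continuity of \eqref{Eq:kappa-Chain}, continuous in $\nu$; and because $M = \bigcup _{\nu < \kappa} M _\nu$ we get $\bigcup _{\nu < \kappa} P _{\alpha , \nu} = A _\alpha + A _{\alpha + 1} = A _{\alpha + 1}$. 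Concatenating the blocks $( P _{\alpha , \nu} ) _{\nu < \kappa}$ in the lexicographic order of the pairs $(\alpha , \nu)$ and deleting repeated terms, one obtains a strictly increasing chain that starts at $P _{0 , 0} = 0$, has union $\bigcup _{\alpha < \tau} A _\alpha = M$ by Lemma~\ref{Lemma:2}(d), and --- since deleting repetitions from a continuous chain again yields a continuous chain (at a limit stage of the reduced chain the corresponding index of the original chain is once more a limit ordinal) --- is continuous and well-ordered. This is the chain \eqref{Eq:rho-Chain}, and condition (c) is immediate.

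For the factors, the one computation that matters is the second isomorphism theorem combined with the modular law applied to $A _\alpha$, $A _{\alpha + 1} \cap M _\nu$ and $A _{\alpha + 1} \cap M _{\nu + 1}$ (valid since $A _\alpha \le A _{\alpha + 1}$ and $A _{\alpha + 1} \cap M _\nu \le A _{\alpha + 1} \cap M _{\nu + 1}$), which gives
\[
  P _{\alpha , \nu + 1} / P _{\alpha , \nu} \;\cong\; ( A _{\alpha + 1} \cap M _{\nu + 1} ) \big/ \bigl( (A _\alpha \cap M _{\nu + 1}) + (A _{\alpha + 1} \cap M _\nu) \bigr) .
\]
On the right-hand side the numerator belongs to $\mathcal{B} _{\nu + 1}$ by Lemma~\ref{Lemma:2}(a), the denominator belongs to $\mathcal{B} _{\nu + 1}$ by Lemma~\ref{Lemma:2}(c), and the denominator is contained in the numerator; after the deletion of repetitions this containment is proper, which is exactly condition (b). Finally, $P _{\alpha , \nu + 1} / P _{\alpha , \nu}$ is a subquotient of $A _{\alpha + 1} / A _\alpha$, hence its rank does not exceed that of $A _{\alpha + 1} / A _\alpha$, which is at most $\kappa$ by Lemma~\ref{Lemma:2}(b); this is condition (a).

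The only real obstacle is the transfinite bookkeeping: one must verify that continuity of the refined chain survives all three kinds of limit stages --- a limit $\nu$ inside a block, a limit $\alpha$, and each passage from the block of $\alpha$ to the block of $\alpha + 1$, where $P _{\alpha + 1 , 0} = A _{\alpha + 1} = \bigcup _{\nu < \kappa} P _{\alpha , \nu}$ appears as the union of the preceding terms --- and that this is not disturbed by discarding repetitions. Each of these points reduces to continuity of the chains \eqref{Eq:tau-Chain} and \eqref{Eq:kappa-Chain}, so the argument is routine; the whole algebraic content sits in the displayed isomorphism above.
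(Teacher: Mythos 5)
Your proposal is correct and follows essentially the same route as the paper, which indicates (just before Lemma~\ref{Lemma:3}) that the chain \eqref{Eq:rho-Chain} consists precisely of the modules $A_\alpha + (A_{\alpha+1} \cap M_\nu)$ well-ordered by inclusion, and which uses your displayed isomorphism (via the modular law and an isomorphism theorem) verbatim in the proof of Theorem~\ref{Thm:Main}. Your write-up merely supplies the transfinite bookkeeping that the paper delegates to Section XVI.1 of the Fuchs--Salce monograph.
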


\section{Main result}

The following is the most important result of this work. Clearly, it generalizes Hill's criterion of freeness of abelian groups, to a criterion for the complete decomposability of modules over domains.

\begin{theorem}
  \label{Thm:Main}
  A torsion-free module $M$ over a domain is completely decomposable if there exists a countable, ascending chain
  \begin{equation}
    \label {Eq:CountChain}
    0 = M _0 < M _1 < \ldots < M _n < \ldots \quad (n < \omega)
  \end{equation}
  of submodules of $M$, such that
  \begin{enumerate}
    \item[(a)] every $M _n$ is completely decomposable,
    \item[(b)] every $M _n$ is pure in $M$, and
    \item[(c)] $M = \bigcup _{n < \omega} M _n$.
  \end{enumerate}
\end{theorem}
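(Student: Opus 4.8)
The plan is to apply Lemma~\ref{Lemma:3} with $\kappa = \aleph_0$, using the countable chain \eqref{Eq:CountChain} in the role of the chain \eqref{Eq:kappa-Chain}, and then to verify that the resulting chain \eqref{Eq:rho-Chain} satisfies the hypotheses of Lemma~\ref{Lemma:1}. First I would check the standing assumptions (A)--(C) preceding Lemma~\ref{Lemma:2}: condition (C) is exactly hypothesis (c) of the theorem, condition (A) is hypothesis (b), and condition (B) holds because each $M_n$ is completely decomposable, so by Example~\ref{Ex:1} it carries a $G^*(\aleph_0)$-family $\mathcal{B}_n$ consisting of pure submodules, each of which is itself completely decomposable and such that the quotient of any two comparable members is completely decomposable. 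Thus Lemma~\ref{Lemma:3} furnishes a continuous, well-ordered ascending chain \eqref{Eq:rho-Chain} with $M = \bigcup_{\rho < \sigma} N_\rho$ and with each factor $N_{\rho+1}/N_\rho$ isomorphic to a quotient $A/B$ where $B < A$ are both members of some $\mathcal{B}_n$.

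Next I would identify the two properties of \eqref{Eq:rho-Chain} needed for Lemma~\ref{Lemma:1}. For (a): since $A, B \in \mathcal{B}_n$ with $B \leq A$, the final sentence of Example~\ref{Ex:1} gives that $A/B$ is completely decomposable, hence so is $N_{\rho+1}/N_\rho$. For (b): I must show each exact sequence $0 \to N_\rho \to N_{\rho+1} \to N_{\rho+1}/N_\rho \to 0$ splits. The key observation is that $N_\rho$ is \emph{pure} in $N_{\rho+1}$ — indeed each $N_\rho$ is pure in $M$, since it is a union over $n$ of members of the $\mathcal{B}_n$'s and each such member is pure in $M_n$ which is pure in $M$, and purity is transitive and preserved under unions of chains. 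A pure submodule with completely decomposable quotient splits off: a completely decomposable module is a direct sum of rank-one torsion-free modules, each of which is countably generated (in fact an ideal up to isomorphism), and a pure submodule of countable corank with completely decomposable quotient is a summand because one can lift a decomposing basis of the quotient back through the purity, using that finite-rank pure submodules of torsion-free modules over a domain behave well under the relative-divisibility / solvability-of-systems condition. Granting (a) and (b), Lemma~\ref{Lemma:1} yields that $M$ is completely decomposable, completing the proof.

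The main obstacle is the splitting claim (b), i.e.\ showing that a pure submodule $N_\rho$ of $N_{\rho+1}$ with completely decomposable quotient is a direct summand. This is where the countability ($\kappa = \aleph_0$) is essential: by Lemma~\ref{Lemma:3}(a) the quotient $N_{\rho+1}/N_\rho$ has rank at most $\aleph_0$, so it is a countable direct sum $\bigoplus_k R_k$ of rank-one modules, and I would build a splitting map coordinate by coordinate — for each $k$ pick a preimage in $N_{\rho+1}$ of a generator of $R_k$ and use purity of $N_\rho$ to adjust it so that the partial sums assemble coherently. One has to be slightly careful that this stepwise lifting actually produces a submodule complementary to $N_\rho$ rather than merely a collection of liftings; the continuity of the chain and the transfinite bookkeeping already handled in Lemmas~\ref{Lemma:2} and~\ref{Lemma:3} is what keeps this under control, and the relevant splitting of pure sequences with completely decomposable countable-rank quotient is a standard fact (see \cite{Fuchs-Salce2}) which I would invoke rather than reprove. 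Everything else is routine verification that the hypotheses of the cited lemmas are met.
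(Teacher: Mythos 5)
Your setup is correct and matches the paper's: conditions (A)--(C) hold with $\kappa=\aleph_0$ via Example \ref{Ex:1}, Lemmas \ref{Lemma:2} and \ref{Lemma:3} produce the chain \eqref{Eq:rho-Chain}, and condition (a) of Lemma \ref{Lemma:1} follows because each factor $N_{\rho+1}/N_\rho$ is isomorphic to $A/B$ with $B<A$ in some $\mathcal{B}_n$. The genuine gap is in your argument for the splitting condition (b). You reduce it to the assertion that a pure submodule with completely decomposable quotient of countable rank is a direct summand, and you propose to lift a decomposition of the quotient coordinate by coordinate through purity. That assertion is false in general, and it is precisely the point where the passage from free to completely decomposable modules is delicate: already over $\mathbb{Z}$, a pure rank-one subgroup $N$ of an indecomposable torsion-free group $M$ of rank two has $M/N$ of rank one, hence completely decomposable, yet $N$ is not a summand. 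Purity lets you solve finite systems of equations inside $N$, but it does not let you choose preimages of generators of the rank-one summands of the quotient that assemble into a complement; in Hill's original theorem this step works because a free quotient is projective, whereas rank-one torsion-free modules over a general domain are not projective. The ``standard fact'' you propose to cite from \cite{Fuchs-Salce2} is not available.

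The paper closes this step by a different mechanism. Writing $N_\rho = A_\alpha + (A_{\alpha+1}\cap M_n)$ and $N_{\rho+1} = A_\alpha + (A_{\alpha+1}\cap M_{n+1})$, the modular law and the first isomorphism theorem identify $N_{\rho+1}/N_\rho$ with $(A_{\alpha+1}\cap M_{n+1})\big/\bigl((A_\alpha\cap M_{n+1})+(A_{\alpha+1}\cap M_n)\bigr)$. Properties (a) and (c) of Lemma \ref{Lemma:2} guarantee that both the numerator and the denominator of this quotient belong to $\mathcal{B}_{n+1}$, which by construction consists of the partial direct sums of one fixed rank-one decomposition of $M_{n+1}$; hence the denominator is literally a direct summand of the numerator, and this splitting is transported through a commutative diagram to the sequence $0\to N_\rho\to N_{\rho+1}\to N_{\rho+1}/N_\rho\to 0$. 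So the splitting comes not from purity of $N_\rho$ in $N_{\rho+1}$ but from conclusion (c) of Lemma \ref{Lemma:2}, which your proposal never invokes. To repair your argument you would need to replace the false splitting principle with this identification and diagram chase.
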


\begin{proof}
  For each $n < \omega$, let $\mathcal {B} _n$ be the family consisting of all partial direct sums of modules in a fixed decomposition of $M _n$ as a direct sum of rank-one submodules. Example \ref {Ex:1} guarantees that such collection is a $G ^* (\aleph _0)$-family of pure submodules of $M _n$. Then, there exists a continuous, well-ordered, ascending chain \eqref {Eq:tau-Chain} satisfying the properties (a), (b), (c) and (d) of Lemma \ref {Lemma:2}. The modules $A _\alpha + (A _{\alpha + 1} \cap M _n)$ with $\alpha < \tau$ and $n < \omega$, well-ordered under set inclusion, form a continuous, ascending chain \eqref {Eq:rho-Chain} satisfying (a), (b) and (c) in Lemma \ref {Lemma:3}. In particular, \eqref {Eq:rho-Chain} satisfies (a) and (c) of Lemma \ref {Lemma:1}, and we only need to show that (b) also holds.

  Recall that $N _\rho$ and $N _{\rho + 1}$ take on the form $A _\alpha + (A _{\alpha + 1} \cap M _n)$ and $A _\alpha + (A _{\alpha + 1} \cap M _{n + 1})$, respectively, for some $\alpha < t$ and $n < \omega$. We have the following commutative diagram with exact rows and exact columns in which $\iota$ and $\iota ^\prime$ are inclusion homomorphisms, and the isomorphism in the bottom row is the result of applying the modular law and the first isomorphism theorem to the factor module of the right:
  \begin{equation*}
    \xymatrix{
      & 0 \ar[d] & 0 \ar[d] \\
      0 \ar[r] & (A _\alpha \cap M _{n + 1}) + (A _{\alpha + 1} \cap M _n) \ar[d]\ar[r]^\iota & A _\alpha + (A _{\alpha + 1} \cap M _n) \ar[d] \\
      0 \ar[r] & A _{\alpha + 1} \cap M _{n + 1} \ar[d]\ar[r]^{\iota ^\prime} & A _\alpha + (A _{\alpha + 1} \cap M _{n + 1}) \ar[d] \\
      0 \ar[r] & \displaystyle {\frac {A _{\alpha + 1} \cap M _{n + 1}} {(A _\alpha \cap M _{n + 1}) + (A _{\alpha + 1} \cap M _n)}} \ar[d]\ar[r] & \displaystyle {\frac {A _\alpha + (A _{\alpha + 1} \cap M _{n + 1})} {A _\alpha + (A _{\alpha + 1} \cap M _n)}} \ar[d]\ar[r] & 0 \\
      & 0 & 0}
  \end{equation*}
  Properties (a) and (c) of Lemma \ref {Lemma:2} state that $(A _\alpha \cap M _{n + 1}) + (A _{\alpha + 1} \cap M _n)$ and $A _{\alpha + 1} \cap M _{n + 1}$ both belong in $\mathcal {B} _{n + 1}$, so that the first row splits. It follows that the sequence $0 \rightarrow N _\rho \rightarrow N _{\rho + 1} \rightarrow N _{\rho + 1} / N _\rho \rightarrow 0$ splits and, thus, $M$ is completely decomposable by Lemma \ref {Lemma:1}.
\end{proof}

\end{document}